\newtheorem{thm}{Theorem}[section]
\newtheorem{prop}[thm]{Proposition}
\newtheorem{lem}[thm]{Lemma}
\newtheorem{cor}[thm]{Corollary}
\theoremstyle{definition}
\newtheorem{rem}[thm]{Remark}
\newtheorem{exmpl}[thm]{Example}
\newcommand{\e}{\varepsilon}
\newcommand{\SOg}{{\rm{\bf SO}}}
\newcommand{\End}{{\rm {\bf End}}}
\newcommand{\SUg}{{\rm {\bf SU}}}
\newcommand{\Hom}{{\rm {\bf Hom}}}
\newcommand{\Sym}{{\rm {\bf Sym}}}
\newcommand{\NN}{\mathbb N}
\newcommand{\HH}{\mathbb H}
\newcommand{\ZZ}{\mathbb Z}
\newcommand{\RR}{\mathbb R}
\newcommand{\CC}{\mathbb C}
\newcommand{\PP}{\mathbb P}
\newcommand{\im}{{\rm im\,}}
\newcommand{\Sq}{{\rm Sq}}
\title{A note on the topology of irreducible $\SOg(3)$-manifolds}
\author{Panagiotis Konstantis\footnote{Phillips Universit\"at Marburg}}
\date{}
\begin{document}

\maketitle

\begin{abstract}
A connected, oriented $5$-manifold $M$ is an \emph{irreducible $\SOg(3)$-manifold} if there exists a rank $3$ vector bundle $\eta$ over $M$ such that the tangent bundle is isomorphic to the bundle of symmetric trace-free endomorphisms of $\eta$.
In this article we give necessary and sufficient topological conditions for the existence of irreducible $\SOg(3)$-manifolds. There we have to distinguish if the manifold is spin or not. We also provide some new examples of irreducible $\SOg(3)$-manifolds. 
\end{abstract}

\section{Introduction}\label{SIntroduction}
Throughout this article let $M$ be always a smooth, connected and oriented manifold. 
In this paper our aim is to determine topological conditions for the existence of certain rank $3$ subbundles of the tangent bundle for $5$-manifolds. A related situation (cf. Remark \ref{RReductionGeometrically}) was studied by E. Thomas in \cite{Thomas1967a}. There he investigated the conditions to the existence of \emph{two-fields} on manifolds $M$ of dimension $4k+1$ ($k>0$). A \emph{two-field} is a pair of tangent vector fields which are linearly independent in every point of the manifold. Hence every two-field determines a trivial rank $2$ subbundle of the tangent bundle of $M$. He proved the following

\begin{thm}[{\cite[Corollary 1.2]{Thomas1967a}}]\label{TThomasStandardStructure}
Let $M$ be a closed, connected,  spin manifold of dimension $4k+1$, $k>0$. Then $M$ admits a two--field if and only if $w_{4k}(M)=0$ and $\hat\chi(M)=0$,
\end{thm}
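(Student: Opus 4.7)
\emph{Proof proposal.} The natural approach is obstruction theory. A two-field on $M$ is the same as a section of the Stiefel bundle $V_2(TM) \to M$, whose fibre is the real Stiefel manifold $V_2(\RR^{4k+1})$.

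The first step is to compute the relevant homotopy groups of this fibre from the sphere bundle
\begin{equation*}
S^{4k-1} \lr V_2(\RR^{4k+1}) \lr S^{4k},
\end{equation*}
whose connecting homomorphism in the long exact sequence is multiplication by the Euler number $\chi(S^{4k}) = 2$. This shows that $V_2(\RR^{4k+1})$ is $(4k-2)$-connected with $\pi_{4k-1} \cong \ZZ/2$, and a further computation using the Hopf class yields $\pi_{4k} \cong \ZZ/2$. Thus the only possibly nonzero obstructions occur in degrees $4k$ and $4k+1$. The spin condition on $M$, combined with $\chi(M) = 0$ in odd dimension, ensures that the lower-dimensional obstructions vanish, so that a $2$-frame is defined over the $(4k-1)$-skeleton.

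Next I would identify the primary obstruction in $H^{4k}(M;\ZZ/2)$ with $w_{4k}(M)$. This is the classical identification of Stiefel-Whitney classes as mod-$2$ obstructions to frames: a nonvanishing vector field $X$ (which exists because $\chi(M)=0$) reduces the problem to finding a nonvanishing section of $X^\perp$, and the primary mod-$2$ obstruction is the top Stiefel-Whitney class $w_{4k}(X^\perp) = w_{4k}(M)$.

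The main obstacle is identifying the secondary obstruction in $H^{4k+1}(M;\ZZ/2) \cong \ZZ/2$ with the mod-$2$ semicharacteristic $\hat\chi(M)$. For this I would invoke Atiyah's interpretation of the Kervaire semicharacteristic on a spin $(4k+1)$-manifold as the mod-$2$ index of a Dirac-type operator, and show that the secondary obstruction coincides with this index. This comparison is typically carried out either by direct verification on a set of spin bordism generators or by analysing the $k$-invariants in the Postnikov tower of $V_2(\RR^{4k+1})$, and forms the technical heart of the proof.
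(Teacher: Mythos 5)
This statement is not proved in the paper at all: it is quoted verbatim from Thomas \cite{Thomas1967a}, and the paper only restates its two key ingredients (Theorems~\ref{TThomasResultsOnOMega} and~\ref{TThomasResultOnI2M}) for reuse in Section~\ref{SSpinCase}. Measured against Thomas's actual argument, your obstruction-theoretic frame is the right one: a two-field is a section of the bundle with fibre $V_2(\RR^{4k+1})$, that fibre is $(4k-2)$-connected with $\pi_{4k-1}\cong\ZZ_2$ and $\pi_{4k}\cong\ZZ_2$, and the primary obstruction in $H^{4k}(M;\ZZ_2)$ is $w_{4k}(M)$. One correction: no spin hypothesis and no appeal to $\chi(M)=0$ is needed to obtain a section over the $(4k-1)$-skeleton --- the $(4k-2)$-connectivity of the fibre gives this for free. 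The spin condition enters only in the top-dimensional step.

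The genuine gap is your last paragraph, which is where the entire content of the theorem sits, and neither of the routes you propose can close it. The secondary obstruction is a priori a coset in $H^{4k+1}(M;\ZZ_2)$ depending on the section chosen over the $4k$-skeleton; you do not address this indeterminacy, whereas Thomas encodes the obstruction as the index $I_2M$ of a $2$-field with finite singularities, equivalently as a secondary cohomology operation $\Omega$ applied to the Thom class, and shows the indeterminacy vanishes (this is what Theorem~\ref{TThomasResultsOnOMega} records: $\Omega(U_M)=U\smile(I_2M\,\mu)$ with zero indeterminacy). More seriously, the identification of this class with $\hat\chi(M)$ cannot be done the ways you suggest. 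Verification on spin bordism generators is impossible because neither side is a bordism invariant: $S^{4k+1}$ bounds a disc, yet $\hat\chi(S^{4k+1})=1$ and $S^{4k+1}$ carries one but not two independent vector fields. Invoking Atiyah's index theorem is both anachronistic (Theorem~\ref{TAtiyah} is the later, stronger result, proved by entirely different K-theoretic methods) and off by a translation: Atiyah's operator computes the Kervaire semi-characteristic $k(M)$, which coincides with $\hat\chi(M)$ only on spin manifolds by Lusztig--Milnor--Peterson \cite{Lustzig}. The equality $I_2M=\hat\chi(M)$ is precisely Thomas's Theorem~1.1 (quoted here as Theorem~\ref{TThomasResultOnI2M}), and it is the technical heart of the proof; your proposal leaves exactly this step unestablished.
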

\noindent where $w_{4k}(M)$ is the $4k$-th Stiefel-Whitney class of $M$ and $\hat\chi(M)$ is the \emph{semi-characteristic} of $M$, which is defined as
\[
 \hat\chi(M)=\sum_{i=0}^{2k} \dim_{\ZZ_2}H^i(M;\ZZ_2) \mod 2.
\]
If $M$ is a spin manifold like in Theorem \ref{TThomasStandardStructure} with $\dim M =5$, we can apply Wu's formula to see that $w_4(M)=w_2(M)\smile w_2(M)=0$, hence we obtain
\begin{cor}[see also \cite{Thomas1968}]\label{CThomasStandStructure}
Suppose $M$ is a closed, connected spin $5$-manifold. Then $M$ admits a two--field if and only if $\hat\chi(M)=0$.
\end{cor}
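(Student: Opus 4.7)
The plan is to reduce the corollary directly to Theorem \ref{TThomasStandardStructure} with $k=1$, the only thing requiring checking being that the Stiefel--Whitney condition $w_4(M)=0$ is automatic for any closed, connected spin $5$-manifold. Once this is established, Theorem \ref{TThomasStandardStructure} says that $M$ admits a two-field if and only if $\hat\chi(M)=0$, which is precisely the corollary.

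First I would invoke Wu's formula for a closed $n$-manifold: the total Stiefel--Whitney class and the total Wu class are related by $w=\Sq(v)$, where $v=1+v_1+v_2+\cdots$, and the Wu classes vanish in degrees $i$ with $2i>n$. For $n=5$ this gives $v_i=0$ for all $i\geq 3$, so reading off the degree-$4$ component of $w=\Sq(v)$ yields
\[
w_4(M)=\Sq^2(v_2)=v_2^2.
\]

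Next I would invert $w=\Sq(v)$ in low degrees to express the surviving Wu classes in terms of Stiefel--Whitney classes: one finds $v_1=w_1$ and $v_2=w_2+w_1^2$. The spin hypothesis forces $w_1(M)=0$ and $w_2(M)=0$, so $v_2=0$ and hence $w_4(M)=v_2^2=0$. In fact this reasoning already yields the slightly stronger identity $w_4(M)=w_2(M)\smile w_2(M)$ on any orientable closed $5$-manifold, which is the form indicated in the paragraph preceding the corollary.

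Finally, inserting the vanishing of $w_4(M)$ into Theorem \ref{TThomasStandardStructure} proves the corollary. There is no substantive obstacle: the result is an immediate consequence of the theorem combined with the above standard Wu-class manipulation, and the only feature of the dimension $5$ being exploited is that $\lfloor n/2\rfloor = 2$ is small enough to allow $v_i$ to be expressed purely in terms of $w_1,w_2$.
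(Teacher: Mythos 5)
Your proposal is correct and follows exactly the paper's route: the paper likewise observes that Wu's formula gives $w_4(M)=w_2(M)\smile w_2(M)=0$ for a closed orientable (spin) $5$-manifold and then applies Theorem \ref{TThomasStandardStructure} with $k=1$. Your spelled-out Wu-class computation ($v_i=0$ for $i\geq 3$, $v_2=w_2+w_1^2$, $w_4=v_2^2$) is a valid and complete justification of the step the paper states in one line.
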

Later, M. Atiyah proved a more general theorem using K-theory, where $M$ has not to be spin. 
\begin{thm}[{\cite[Theorem 5.1]{Atiyah1970}}]\label{TAtiyah}
Let $M$ be a connected, closed, oriented manifold of dimension $4k+1$, $k>0$. Then $M$ admits a two--field if and only if $k(M)=0$, where 
\[
 k(M) = \sum_{i=0}^{2k} \dim_\RR H^{2i}(X;\RR) \mod 2
\]
is the Kervaire semi-characteristic of $M$.
\end{thm}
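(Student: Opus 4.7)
The plan is to follow Atiyah's K-theoretic approach, combining obstruction theory with the real Atiyah--Singer index theorem. First I would translate the question into obstruction theory: a two-field is a section of the Stiefel bundle $V_2(TM) \to M$ whose fibre is $F = V_2(\RR^{4k+1}) \cong \SOg(4k+1)/\SOg(4k-1)$. From the fibration $S^{4k-1} \to F \to S^{4k}$ together with $\chi(S^{4k}) = 2$, one finds that $\pi_j(F) = 0$ for $j < 4k-1$ and $\pi_{4k-1}(F) \cong \ZZ_2$. Hence on a $(4k+1)$-dimensional CW complex there is a primary obstruction $o(M) \in H^{4k}(M;\ZZ_2)$ and a secondary obstruction over the top cell that again takes values in a quotient of $\ZZ_2$; the total obstruction therefore reduces to a single $\ZZ_2$-valued invariant of $M$, and the theorem amounts to identifying it with $k(M)$.

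Next I would realise $k(M)$ as a mod $2$ analytic index. Consider the real skew-adjoint operator $P = d + d^*$ acting on the space of even-degree differential forms $\bigoplus_{i=0}^{2k}\Omega^{2i}(M)$. By Hodge theory, $\dim_\RR \ker P = \sum_{i=0}^{2k}\dim_\RR H^{2i}(M;\RR)$, and the mod $2$ analytic index of a real skew-adjoint elliptic operator takes values in $KO^{-1}(\mathrm{pt}) \cong \ZZ_2$ and equals $\dim_\RR \ker P \bmod 2$, which is exactly $k(M)$. The real Atiyah--Singer index theorem then identifies this analytic index with the topological index of the $KO^{-1}$-symbol class of $P$ in $KO^{-1}(TM)$.

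Finally I would combine the two halves. For the easy direction ``two-field $\Rightarrow k(M) = 0$'': a two-field trivialises a rank-$2$ oriented subbundle of $TM$, giving a stable splitting $TM \simeq \epsilon^2 \oplus \eta$, so the symbol of $P$ factors through a double real suspension, and real Bott periodicity forces its topological index to vanish. The main obstacle is the converse: one must show that the vanishing of this $KO^{-1}$-valued index precisely detects the primary obstruction $o(M)$, and that on a closed oriented $(4k+1)$-manifold the secondary obstruction over the top cell automatically vanishes whenever the primary one does --- the latter via Poincar\'e duality and the existence of nowhere-zero vector fields in odd dimensions. The identification of $o(M)$ with the mod $2$ index is the technical heart of the argument; it rests on a K-theoretic analysis of the symbol classes associated to partial framings and on the naturality of the index with respect to the universal classifying map for $2$-frames.
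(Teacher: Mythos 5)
First, a point of reference: the paper does not prove this statement at all --- Theorem \ref{TAtiyah} is quoted from Atiyah's \emph{Vector Fields on Manifolds} and used as a black box, so there is no internal proof to compare against. Your sketch does follow the broad lines of Atiyah's actual argument for the \emph{necessity} direction: one realises $k(M)$ as the mod $2$ kernel dimension of a real skew-adjoint elliptic operator built from $d+d^{*}$ (strictly speaking $d+d^{*}$ sends even forms to odd forms, so you must first compose with the isomorphism $\Omega^{\mathrm{odd}}\cong\Omega^{\mathrm{ev}}$ coming from the Hodge star before you have a skew-adjoint endomorphism of a fixed space), and a two-field then equips the kernel with a module structure over the Clifford algebra $C_{2}\cong\HH$, forcing $\dim_{\RR}\ker$ to be divisible by $4$; equivalently, the index refines to an element of $KO^{-3}(\mathrm{pt})=0$. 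Up to these imprecisions, that half is sound in outline.

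The genuine gap is the converse, which you explicitly defer to ``the technical heart of the argument'' without supplying it; a proposal that names the hard step but does not carry it out is not a proof. Moreover, the outline you give for that step is structurally wrong on two counts. First, the primary obstruction $o(M)$ lives in $H^{4k}(M;\ZZ_{2})$, which is in general a large group, so the total obstruction does not ``reduce to a single $\ZZ_{2}$-valued invariant'' and cannot be detected by the one number $k(M)$ without a genuine argument (for instance, the relation of $k(M)$ to $\hat\chi(M)$ and to $w_{2}w_{4k-1}[M]$ in the spirit of Lusztig--Milnor--Peterson, together with the fact that $k(M)=0$ forces $w_{4k}(M)=0$). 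Second, the claim that the secondary obstruction over the top cell ``automatically vanishes whenever the primary one does'' contradicts Theorem \ref{TThomasStandardStructure} as quoted in this very paper: for spin manifolds the two conditions $w_{4k}(M)=0$ and $\hat\chi(M)=0$ are independent, and the top obstruction \emph{is} essentially the semi-characteristic (compare Theorems \ref{TThomasResultsOnOMega} and \ref{TThomasResultOnI2M}) --- it is exactly the quantity the theorem is about, not something that vanishes for free once $o(M)=0$. As it stands, the sufficiency direction of the statement is unproved in your proposal.
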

For an orientied $5$--manifold $M$, both semi--characteristics are connected by the \emph{Lusztig--Milnor--Peterson formula}
\[
k(M) - \hat\chi(M)  = \langle w_2(M) \smile w_3(M), [M]\rangle,
\]
where $[M] \in H_5(M)$ is the fundamental class (cf. \cite{LMP1969}).

The existence of a two--field on a $5$--manifold $M$ could be rephrased to the existence of a rank $3$ subbundle $\eta$ of $\tau_M$, such that the Whitney sum of $\eta$ with a trivial rank $2$ vector bundle is isomorphic to $\tau_M$. In this article we would like to study a similar situation. A manifold $M$ of dimension $5$ is called an \emph{irreducible $\SOg(3)$--manifold} if there is a rank $3$ vector bundle $\eta$ over $M$ such that the tangent bundle of $M$ is isomorphic to the bundle of symmetric trace--free endomorphisms of $\eta$ (see Remark \ref{RReductionGeometrically} for an explanation why such manifolds are called \emph{irreducible}). We will prove two main theorems about the topology of irreducible $\SOg(3)$-manifolds, which may be summarized as follows:

%
\begin{thm}\label{TItroduction}
Let $M$ be a closed, oriented and connected manifold of dimension five. 
  \begin{enumerate}
   \item[(a)] Suppose $w_2(M)=0$ (i.e. if $M$ is spin). Then $M$ is an irreducible $\SOg(3)$-manifold if and only if 
   \begin{enumerate}
    \item[(i)] $w_4(M)=0$ and the first Pontryagin class $p_1\in H^4(M;\ZZ)$ is divisible by five,
    \item[(ii)] $\hat\chi(M)=0$.
   \end{enumerate}

   \item[(b)] Suppose $w_2(M)\neq 0$ and $H^4(M;\ZZ)$ contains no element of order $4$. Then $M$ is an irreducible $\SOg(3)$-manifold if and only if $w_4(M)=0$ and $p_1(M)$ is divisible by five.
  \end{enumerate}
\end{thm}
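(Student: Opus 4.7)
The problem is naturally one of obstruction theory: an irreducible $\SOg(3)$-structure on $M$ is exactly a lift of the tangent bundle classifying map $\tau\colon M\to B\SOg(5)$ through $B\rho\colon B\SOg(3)\to B\SOg(5)$, where $\rho$ is the $5$-dimensional irreducible real representation of $\SOg(3)$ on symmetric trace-free $3\times 3$ matrices. My plan is to study this lift along the homotopy fibre sequence
\[
 F \longrightarrow B\SOg(3) \xrightarrow{B\rho} B\SOg(5),\qquad F:=\SOg(5)/\rho(\SOg(3)).
\]
For necessity, restricting $\rho$ to a maximal torus of $\SOg(3)$ and reading off the weights $0,\pm 1,\pm 2$, a direct computation yields $w_2(\Sym_0\eta)=w_2(\eta)$, $w_4(\Sym_0\eta)=0$ and $p_1(\Sym_0\eta)=5\,p_1(\eta)$. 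Hence $TM\cong \Sym_0\eta$ forces $w_2(M)=w_2(\eta)$ (so $M$ and $\eta$ share spin type), $w_4(M)=0$, and $5\mid p_1(M)$.

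For sufficiency I would run the obstruction theory along $B\rho$. The long exact homotopy sequence of $\rho\colon\SOg(3)\hookrightarrow\SOg(5)$, combined with the observation that $\rho$ does not lift to $\mathrm{Spin}(5)$ (most cleanly via $\mathrm{Spin}(5)\cong\mathrm{Sp}(2)$ and the identification of $\rho$ with the composition of $\mathrm{Sym}^3\colon \mathrm{SU}(2)\to\mathrm{Sp}(2)$ with the double cover to $\SOg(5)$, which sends $-I\in\mathrm{SU}(2)$ to $-I\in\mathrm{Sp}(2)$) and with the $p_1$-computation above, shows that $F$ is $2$-connected with $\pi_3(F)=\ZZ/5$. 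The primary obstruction therefore lies in $H^4(M;\ZZ/5)$ and, by naturality, is the mod-$5$ reduction of $p_1(M)$; it vanishes by hypothesis. The next (and, since $\dim M=5$, last) obstruction lies in $H^5(M;\pi_4(F))$, and $\pi_4(F)$ is a subquotient of $\pi_4(\SOg(5))=\ZZ/2$, so this secondary class is a $2$-primary invariant of $M$.

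In case (a), since $M$ is spin one has $w_4(M)=w_2(M)^2=0$ automatically, and I expect the secondary obstruction to be exactly the semi-characteristic $\hat\chi(M)$, in direct analogy with its role in Thomas's Corollary~\ref{CThomasStandStructure}; so condition~(ii) kills it. In case~(b) the assumption that $H^4(M;\ZZ)$ contains no element of order~$4$ should rigidify the choice of integral class $c$ with $5c=p_1(M)$ enough at the prime~$2$ that the secondary obstruction is automatically trivial once $w_4(M)=0$ and $5\mid p_1(M)$, bypassing any semi-characteristic condition. The main obstacle, and the delicate technical work in either case, is the explicit identification of this secondary class: one must track how the $2$-local structure of the pair $(w_2(M),p_1(M))$ interacts with the Postnikov tower of $F$ in degrees $4$ and $5$, confirming the match with $\hat\chi(M)$ in the spin case and the automatic vanishing in the non-spin case under the $4$-torsion hypothesis.
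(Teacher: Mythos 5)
Your overall strategy for part (a) --- obstruction theory along the fibration $B\rho\colon B\SOg(3)\to B\SOg(5)$ with homotopy fibre the Berger space --- is indeed the paper's strategy, but your proposal contains one computational error and one genuine gap. The error: $\pi_3(F)$ is $\ZZ_{10}$, not $\ZZ_5$ (the Berger space $\SOg(5)/\rho(\SOg(3))$ has $H_3=\ZZ_{10}$). Consequently the primary obstruction lives in $H^4(M;\ZZ_{10})\cong H^4(M;\ZZ_5)\oplus H^4(M;\ZZ_2)$ and is computed in Proposition \ref{PComputingk1} to be $(\pm\rho_5(p_1),w_4)$: the vanishing of $w_4(M)$ is part of the \emph{primary} obstruction, not merely an independent consequence of a weight computation. (Also, mod $2$ classes such as $w_4(\Sym_0\eta)$ cannot be read off from maximal-torus weights alone; the paper uses the explicit matrix form of $\rho$ together with \cite[Theorem 5.9]{Mimura1991}.) The gap: the identification of the secondary obstruction with $\hat\chi(M)$, which you flag as ``expected'' and as ``the main obstacle'', is precisely the technical core of the proof and is not a routine verification. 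The paper first shows that the secondary obstruction set $k_2(\tau_M)$ is a coset of $\Sq^2H^3(M;\ZZ_2)+H^3(M;\ZZ_2)\smile w_2(M)$ (which is zero in the spin case), then relates $U_M\smile k_2(\tau_M)$ to a secondary cohomology operation $\Omega$ attached to the Adem relation $\Sq^2\Sq^4=0$ applied to the Thom class, and finally invokes Thomas's theorems $\Omega(U_M)=U_M\smile(I_2M\,\mu)$ and $I_2M=\hat\chi(M)$. Without some version of this chain, part (a) is not proved.

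For part (b) your proposed mechanism neither matches what actually happens nor is carried out. The paper abandons the Postnikov tower here and instead uses the \u{C}adek--Van\v{z}ura classification of oriented rank $3$ and rank $5$ bundles over $5$-complexes whose $H^4$ contains no element of order $4$ --- that is where the $4$-torsion hypothesis enters (it is a hypothesis of the classification theorems, not a device for ``rigidifying'' the choice of $P$ with $5P=p_1(M)$). Writing $p_1(M)=5P$, one has $\rho_4 P=\rho_4(5P)=\mathcal P w_2(M)$, so Theorem \ref{TCV3Bundles} produces a rank $3$ bundle $\eta$ with $w_2(\eta)=w_2(M)$ and $p_1(\eta)=P$; then $\Sym_0(\eta)$ has the same $w_2$, $w_4$ and $p_1$ as $\tau_M$ and is therefore isomorphic to it by Theorem \ref{TCV5Bundles}. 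If one insists on the obstruction-theoretic route, the correct reason the secondary obstruction is harmless when $w_2(M)\neq0$ is Proposition \ref{Pk2CosetOf} combined with Poincar\'e duality: the indeterminacy subgroup $H^3(M;\ZZ_2)\smile w_2(M)$ is already all of $H^5(M;\ZZ_2)$, so $0\in k_2(\tau_M)$ automatically --- a quite different phenomenon from the one you conjecture, and one that makes no visible use of the $4$-torsion hypothesis.
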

For a simply connected, closed $5$--manifold we have $H^4(M;\ZZ)=H^4(M;\ZZ_2)=0$ by Poincar\'e duality and $\hat\chi(M)= 1 + \dim_{\ZZ_2} H_2(M;\ZZ_2) \mod 2$. Hence we obtain

\begin{cor}\label{CNonSpinCase}Let $M$ be a simply connected, closed $5$--manifold.
\begin{enumerate}
 \item[(a)] Let $w_2(M)=0$. Then $M$ is an irreducible $\SOg(3)$--manifold if and only if $\dim_{\ZZ_2}H_2(M;\ZZ_2)$ is odd.
 \item[(b)] Let $w_2(M)\neq 0$. Then $M$ is an irreducible $\SOg(3)$--manifold.
\end{enumerate}
\end{cor}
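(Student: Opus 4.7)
The plan is to derive this Corollary directly from Theorem \ref{TItroduction} by showing that under the simply-connected hypothesis, the topological obstructions in parts (a) and (b) of that theorem simplify drastically, most of them becoming vacuous.

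First I would compute the relevant cohomology groups. Since $M$ is a closed, oriented, simply connected $5$-manifold, Poincar\'e duality gives
\[
 H^4(M;\ZZ) \cong H_1(M;\ZZ) = 0, \qquad H^4(M;\ZZ_2) \cong H_1(M;\ZZ_2).
\]
The universal coefficient theorem together with $H_1(M;\ZZ)=0$ and the fact that $H_0(M;\ZZ)$ is free implies $H_1(M;\ZZ_2)=0$, hence $H^4(M;\ZZ_2)=0$ as well. Consequently $w_4(M)$ automatically vanishes, the class $p_1(M)\in H^4(M;\ZZ)=0$ is trivially divisible by five, and $H^4(M;\ZZ)$ certainly contains no element of order $4$. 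Thus in part (b) of Theorem \ref{TItroduction} every hypothesis is satisfied as soon as $w_2(M)\neq 0$, which immediately yields statement (b) of the Corollary.

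For part (a) it remains only to rewrite the semi-characteristic. With $H_0(M;\ZZ_2)\cong\ZZ_2$ and $H_1(M;\ZZ_2)=0$, we get
\[
 \hat\chi(M) = \dim_{\ZZ_2}H_0(M;\ZZ_2) + \dim_{\ZZ_2}H_1(M;\ZZ_2) + \dim_{\ZZ_2}H_2(M;\ZZ_2) \mod 2 = 1 + \dim_{\ZZ_2}H_2(M;\ZZ_2) \mod 2.
\]
So $\hat\chi(M)=0$ if and only if $\dim_{\ZZ_2}H_2(M;\ZZ_2)$ is odd. Combined with the vanishing of $w_4(M)$ and $p_1(M)$ noted above, applying Theorem \ref{TItroduction}(a) gives statement (a) of the Corollary.

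There is essentially no hard step here: the whole proof is a bookkeeping exercise in Poincar\'e duality and the universal coefficient theorem to see that, under simple connectivity, the degree-four obstructions all collapse, and only the mod-two count of $H_2$ survives in the spin case.
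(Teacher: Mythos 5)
Your proof is correct and follows exactly the route the paper takes: the remark immediately preceding the corollary in the text establishes $H^4(M;\ZZ)=H^4(M;\ZZ_2)=0$ via Poincar\'e duality and rewrites $\hat\chi(M)$ as $1+\dim_{\ZZ_2}H_2(M;\ZZ_2) \bmod 2$, and the corollary is then read off from Theorem \ref{TItroduction}. Your only addition is spelling out the universal coefficient bookkeeping, which the paper leaves implicit.
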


Furthermore from Corollary \ref{CThomasStandStructure} we obtain that Theorem \ref{TItroduction} (a) is equivalent to

\begin{cor}\label{CT14aBetterFormulated}
Let $M$ be a closed spin $5$--manifold. Then $M$ is an irreducible $\SOg(3)$--manifold if and only if $M$ admits a two--field and $p_1(M)$ is divisible by five.
\end{cor}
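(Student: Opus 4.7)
The plan is to show that Corollary \ref{CT14aBetterFormulated} is a direct reformulation of Theorem \ref{TItroduction}(a) under the spin hypothesis. Only two ingredients are needed: Wu's formula to dispose of $w_4$, and Corollary \ref{CThomasStandStructure} to re-encode the condition $\hat\chi(M)=0$ as the existence of a standard $\SOg(3)$-structure.

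For condition (i) of Theorem \ref{TItroduction}(a), the paragraph preceding Corollary \ref{CThomasStandStructure} already records that Wu's formula forces $w_4(M) = w_2(M)\smile w_2(M)$ on any closed oriented $5$-manifold. Under the spin assumption $w_2(M)=0$ the class $w_4(M)$ therefore vanishes automatically, and condition (i) collapses to the single requirement that $p_1(M)$ be divisible by five.

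For condition (ii) I would unwind the meaning of a \emph{standard} $\SOg(3)$-structure: by convention this is a reduction of the oriented orthonormal frame bundle along the reducible embedding $\SOg(3)\hookrightarrow\SOg(5)$, $A\mapsto\diag(A,I_2)$, which is equivalent to an orthogonal splitting $TM\cong \xi\oplus \varepsilon^{2}$ with $\xi$ an oriented rank-$3$ bundle and $\varepsilon^2$ the trivial rank-$2$ bundle. Such a splitting is in turn equivalent to a two-field on $M$, so Corollary \ref{CThomasStandStructure} converts condition (ii) into ``$M$ admits a standard $\SOg(3)$-structure''. Substituting both reformulations back into Theorem \ref{TItroduction}(a) yields precisely Corollary \ref{CT14aBetterFormulated}.

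There is no real obstacle here; the content of the corollary is entirely packaging. The only point worth stating carefully is the definitional contrast between the ``standard'' reduction (via the reducible $3\oplus 2$ representation of $\SOg(3)$, producing a two-field) and the ``irreducible'' reduction (via the $5$-dimensional irreducible representation on symmetric trace-free endomorphisms), which is what motivates the terminology in the corollary.
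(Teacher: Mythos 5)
Your argument is exactly the paper's: the corollary is stated there as an immediate consequence of Theorem \ref{TItroduction}(a) together with Corollary \ref{CThomasStandStructure}, with Wu's formula disposing of $w_4(M)$ in the spin case and the two-field criterion re-encoding $\hat\chi(M)=0$ as the existence of a standard $\SOg(3)$-structure. The proposal is correct and adds only the (accurate) unpacking of what a standard $\SOg(3)$-structure means.
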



%
%
%

\begin{rem}\label{RReductionGeometrically}
%
We would like to give an alternative definition of irreducible $\SOg(3)$--manifolds, which will be needed for proving Theorem \ref{TItroduction} (a).  Let $G$ be a Lie group and $\rho \colon G \to \SOg(n)$ an embedding of $G$ as a Lie subgroup of $\SOg(n)$. Let $\tau_M$ be the tangent bundle of $M$. A \emph{$G$-structure on $\tau_M$} is a reduction of the $\SOg(n)$-principal bundle of $\tau_M$ to a $G$-principal bundle over $M$ (where $n$ is the dimension of $M$). If we regard $\tau_M$ as a map from $M$ to the classifying space $B\SOg(n)$, this definition is equivalent to the existence of a lift $\hat\tau_M$ for $\tau_M$ such that the following diagram commutes up to homotopy
\begin{center}
 \begin{tikzcd}
   & BG \arrow{d}{B\rho} \\
   M\arrow{ru}{\hat\tau_M} \arrow{r}{\tau_M} & B\SOg(n), \\
 \end{tikzcd}
\end{center}
where $B\rho$ is the map induced by $\rho$ on the classifying spaces. The existence of such a lift can be decided with obstruction theory, where sometimes the obstructions can be expressed in terms of characteristic classes of $M$. 

The existence of a two-field is equivalent to the existence of a $\SOg(n-2)$--structure on $M$ with respect to the standard embedding of $\SOg(n-2)$ into $\SOg(n)$ ($n=\dim M$). We call such a structure a \emph{standard $\SOg(n-2)$--structure on $M$}.

For $n=5$ there is another embedding of $\SOg(3)$ into $\SOg(5)$ besides the standard one (see \cite{ABBF11}):  Identify $\RR^5$ as a vector space with the space $\Sym_0(\RR^3)$ of symmetric trace free endomorphisms of $\RR^3$. Then for $h \in \SOg(3)$ and $X \in \Sym_0(\RR^3)$ we set $\rho(h)X:= hXh^{-1}$. This defines the \emph{irreducible embedding}
\[
 \rho \colon \SOg(3) \to \End(\RR^5),\quad h\mapsto \rho(h).
\]
It is now easy to see that $\rho(h)$ preserves the standard metric on $\RR^5$, which makes $\rho$ a map into $\SOg(5)$. It is clear that, as representations, the standard embedding and the irreducible embedding of $\SOg(3)$ into $\SOg(5)$ are not equivalent.
An irreducible $\SOg(3)$--manifold is an $\SOg(3)$--structure with respect to the embedding $\rho$.
\end{rem}

{\bf Historical remark.} First steps were made in \cite{Bobienski2006} and later in \cite{ABBF11}. The author of \cite[Theorem 1.4]{Bobienski2006} claimed that an irreducible $\SOg(3)$-structure exists if and only if the manifold admits a standard $\SOg(3)$-structure and the first Pontryagin class is divisible by $5$. However in \cite[Example 3.1]{ABBF11} it was shown that the symmetric space $\SUg(3)/\SOg(3)$ does not have a standard $\SOg(3)$-structure, but it admits an irreducible $\SOg(3)$-structure. Nevertheless in \cite{Bobienski} M. Bobienski reports that the proof of Theorem 1.4 in \cite{Bobienski2006} should work if one assumes the manifold is spin. Indeed we will prove in section \ref{SPostnikovTower} that this is true, but we will use a different approach as in \cite{Bobienski2006} (Bobienski tries to compare the Moore-Postnikov towers of the irreducible and the standard representation of $\SOg(3)$, where in this article we apply the methods of E. Thomas directly to this special case). In \cite[Theorem 3.2]{ABBF11} the authors prove some necessary conditions for the existence of an irreducible $\SOg(3)$-structure on a $5$-manifold, using a special characterization of the tangent bundle. 

\bigskip
In section 2 we will prove Theorem \ref{TItroduction} (a). In particular we will determine properties of the obstructions for lifting a map $\xi   \colon M\to B\SOg(5)$ to a map $M \to B\SOg(3)$ through the fibration $B\rho \colon B\SOg(3) \to B\SOg(5)$. In particular we will use the techniques of E. Thomas, see \cite{Thomas1966},\cite{Thomas1967} and \cite{Thomas1967a}. The second part of Theorem \ref{TItroduction} will be proved in section \ref{SNonSpinManifolds}, which is a quite short proof and depends heavily on the work of \u{C}adek and Van\v{z}ura, cf. \cite{Cadek1993}. In that article the authors classify $3$- and $5$-dimensional vector bundles over certain $5$-complexes. Note that we rely on the condition on $H^4(M;\ZZ)$, since the classification of vector bundles is given by characteristic classes (see e.g. \cite{Thomas1968}), where this condition is necessary (see also Remark \ref{RDrawback}).


The contents of the last section are examples of irreducible $\SOg(3)$--manifolds, where we will exploit Theorem \ref{TItroduction}. In particular: the symmetric space $Y:=\SUg(3)/\SOg(3)$ does not admit a standard $\SOg(3)$-structure (since $k(Y)=1$, see Theorem \ref{TAtiyah}), but an irreducible one with $\hat\chi(Y)=0$. So it was conjectured in \cite{ABBF11}, that $\hat\chi(M)$ could be an obstruction to the existence of an irreducible $\SOg(3)$--structure. However we will prove 

\begin{prop}\label{PCounterExampleToFriedrich}
There is a compact and connected irreducible $\SOg(3)$--manifold $M$ with $\hat\chi(M)=1$. 
\end{prop}
\noindent
The manifold mentioned in the proposition above is constructed as the total space of a circle bundle over a simply connected, compact $4$--manifold. We will also show

\begin{prop}\label{PConnectedSumsOfSpin5Manifolds}
The connected sum of an odd number of irreducible $\SOg(3)$--manifolds which are spin is an irreducible $\SOg(3)$--manifold.
\end{prop}

With proposition we will be able to consider new examples of irreducible $\SOg(3)$-manifolds.


%

\section*{Acknowledgement}
The author would like to thank an unknown reviewer for his helpful comments and for improving the structure of this article. Moreover he thanks Benjamin  Antieau for his help to clarify Remark \ref{ROnTheExistenceOf3BundlesOver5Complex}.

\section{Preliminaries for the Proof of Theorem 1.4 (a)}\label{SPostnikovTower}
We will make use of the definitions of Remark \ref{RReductionGeometrically}.
Let $\rho \colon \SOg(3) \to \SOg(5)$ be the $5$-dimensional irreducible representation of $\SOg(3)$ and $B\rho \colon B_5 \to B_3$ the induced map on classifying spaces where $B_k:=B\SOg(k)$. Let $M$ be an oriented, compact $5$-manifold with $w_2(M)=0$ and $\tau_M \colon M \to B_5$ the classifying map for the tangent bundle of $M$. Consider the following diagram
\begin{center}
 \begin{tikzcd}
   & X=\SOg(5)/\rho(\SOg(3)) \arrow{d} &\\
   & B_3 \arrow{dd}{B\rho} \arrow{dr}{q} &\\
   &   & E\arrow{dl}{p}\\
   M\arrow{ruu}{\hat\tau_m} \arrow{r}{\tau_M} & B_5 & \\
 \end{tikzcd}
\end{center}
The manifold $M$ admits an irreducible $\SOg(3)$--structure if and only if there is a map $\hat\tau_M \colon M \to B_3$ such that $B\rho \circ\hat\tau_M$ is homotopic to $\tau_M$. So Theorem \ref{TItroduction} (a) is implied by the following 
\begin{lem}\label{LKeyLemma}
There is a topological space $E$ and fibration $p \colon E \to B_5$ such that
\begin{enumerate}
\item[(a)] The map $\tau_M \colon M \to B_5$ lifts to a map $M \to E$ if and only if $\rho_5(p_1(M))=0$ and $w_4(M)=0$, where $\rho_5 \colon H^*(M;\ZZ) \to H^*(M;\ZZ_5)$ is the mod $5$ reduction of integer cohomology classes.
\item[(b)] There is a fibration $q \colon B_3 \to E$ such that a lift $\eta \colon M \to E$ of $\tau_M$ lifts to a map $M \to B_3$ if and only if $\hat\chi(M) =0$ (this condition depends only on $M$ not on $\eta$).
\end{enumerate}
\end{lem}
\begin{proof}
We start the definition of $E$ and the fibration $p \colon E \to B_5$. Therefore we need to compute $\pi_3(X)$. Consider the long exact homotopy sequence for $\SOg(3) \to \SOg(5) \to X$ and the fact the $\rho$ induces an isomorphism on the first homotopy groups of $\SOg(3)$ and $\SOg(5)$. By the Hurewicz theorem and the fact that $H_3(X)=\ZZ_{10}$ one obtains $\pi_3(X) \cong \ZZ_{10}$.
Assume now coefficients in $\pi_3(X) \cong \ZZ_{10}\cong \ZZ_5\oplus \ZZ_2$. The Serre exact sequence \cite[Proposition 3.2.1]{Kochm1996} for the fibration $X \to B_3 \to B_5$ is
\[
H^3(X) \stackrel{\tau}{\longrightarrow} H^4(B_5) \stackrel{B\rho^*}{\longrightarrow}H^4(B_3).
\]
where $\tau$ is the transgression map of the fibration $X \to B_3 \to B_5$. By the Universal Coefficient Theorem we have $H^3(X;\pi_3(X)) \cong \Hom(\pi_3(X),\pi_3(X))$. Let $\gamma_1 \in H^3(X;\pi_3(X))$ be the element which corresponds to the identity in $\Hom(\pi_3(X),\pi_3(X))$. We consider the element $-\tau(\gamma_1) \in H^4(B_5;\pi_3(X)) \cong H^4(B_5;\ZZ_{10})$ as a map $-\tau(\gamma_1) \colon B_5 \to K(\ZZ_{10},4)$. Define the space $E$ as the pullback of the pathspace fibration $\Omega K(\ZZ_{10},4) \to \mathcal P \to K(\ZZ_{10},4)$ by $-\tau(\gamma_1)$. Hence one obtains a fibration $p \colon E \to B_5$ with homotopy fibre $\Omega  K(\ZZ_{10},4)$.\\
\\
We prove now \emph{(a)}. It suffices to prove that following conditions are equivalent:
\begin{enumerate}
\item[(i)] the map $\tau_M \colon M \to B_5$ lifts to a map $M \to E$,
\item[(ii)] $\tau^*_M\tau(\gamma_1) =0$,
\item[(iii)] $\tau^*_M\ker(B\rho^*) =0$,
\item[(iv)] $\tau_M^*\rho_5(p_1)=0$ and $\tau^*_M w_4=0$,
\item[(v)] $\rho_5(p_1(M))=0$ and $w_4(M)=0$,
\end{enumerate}
where $p_1$ is the first Pontryagin class and $w_4$ the fourth Stiefel--Whitney class of the universal bundle oder $B_5$.

The equivalence of \emph{(i)} and \emph{(ii)} is essentially proved in \cite[p.3]{Thomas1966}, since the primary obstructions $\mathfrak o_E$ and $\mathfrak o_{B_3}$ to lifting a map $\xi \colon M \to B_5$ to a map $M \to E$ and to a map $M \to B_3$ are equal, and because $\mathfrak o_E$ is complete (since the homotopy fibre of $\Omega K(\ZZ_{10},4)$ of $p$ is $4$--connected).

Since $\im \tau = \ker(B\rho^*)$ and $H^3(X)$ is generated by $\gamma_1$, we obtain that \emph{(ii)} is equivalent to \emph{(iii)}.

Next we compute $\ker(B\rho^*) \subset H^4(B_4;\ZZ_{10}) =  H^4(B_4;\ZZ_{5})\oplus  H^4(B_4;\ZZ_{2})$. It is known that $H^4(B_5;\ZZ) = \ZZ[p_1]\oplus 2\text{-\emph{torsion}}$ and $H^3(B_5;\ZZ)=H^5(B_5;\ZZ)=0$. Moreover the $2$-torsion is of order $2$. The long exact sequence associated to 
    \[
      0 \longrightarrow \ZZ \stackrel{\cdot 5}{\longrightarrow} \ZZ \longrightarrow \ZZ_5 \longrightarrow 0
    \]
    yields the short exact sequence
    \[
      0 \longrightarrow 5\cdot H^4(B_5;\ZZ) \longrightarrow H^4(B_5;\ZZ) \longrightarrow H^4(B_5;\ZZ_5) \longrightarrow 0.
    \]
     Furthermore $5\cdot H^4(B_5;\ZZ) = 5\ZZ[p_1] \oplus 2\text{-\emph{torsion}}$, hence 
    \[
    H^4(B_5;\ZZ_5)=\ZZ_5[\rho_5(p_1)].
    \]
Let $\rho^T$ be $\rho$ restricted to the maximal Torus of $\SOg(3)$, then this map is given by $\rho^T \colon S^1 \to S^1 \times S^1$, $z\mapsto (z,z^2)$ (cf. \cite[p. 69]{ABBF11}). Hence $B\rho^*(p_1)=10p_1$ thus $B\rho^*(\rho_5(p_1))=0$. Finally we would like to determine the kernel of $B\rho^*$ with coefficients in $\ZZ_2$. Again, it is known that $H^4(B_5;\ZZ_2)= \ZZ_2[w_2^2,w_4]$. Using \cite[Theorem 5.9]{Mimura1991} and the explicit map $\rho \colon \SOg(3) \to \SOg(5)$ given in \cite[p.68]{ABBF11} we compute $B\rho^*(w_2)=w_2$ and $B\rho^*(w_4)=0$. It follows that $\ker(B\rho^*)$ is generated by $\rho_5(p_1)$ and $w_4$ and this shows the equivalence \emph{(iii)} and \emph{(iv)}.

From the naturality property of charactersitic classes we have $\tau^*_M p_1 = p_1(M)$ and $\tau^*_M w_4 =w_4(M)$ which shows that \emph{(iv)} is equivalent to \emph{(v)}. This proves part \emph{(a)}.

We proceed with the proof of part \emph{(b)}. First note that $B\rho^*(-\tau(\gamma_1))=0$ since $-\tau(\gamma_1)$ lies in the image of $\tau$. This means that $B\rho \circ (-\tau(\gamma_1)) \colon B_5 \to K(\ZZ_{10},4)$ is homotopic to a constant map, hence there is a lift $q \colon B_3 \to E$ of $B\rho$ through $p$ (cf. \cite[p.3]{Thomas1966}). We replace $q$ by a map which is a fibration and homotopic to $q$. Denote this fibration again by $q$. Let $F$ be its homotopy fibre. From \cite[p.189]{Thomas1967} we know that $F$ is $3$--connected with $\pi_4(F) \cong \pi_4(X)$. From Lemma 3.2 in \cite{ABBF11} and the long exact homotopy sequence it follows that $\pi_4(X)\cong \ZZ_2$. Let $\gamma_2 \in H^4(F;\pi_4(F)) = \Hom(\pi_4(F),\pi_4(F))$ be the identity element as before. We assume from now on coefficients in $\pi_4(F) \cong \ZZ_2$. The Serre exact sequence \cite[Proposition 3.2.1]{Kochm1996} for the fibration $F \to B_3 \to E$ is
\[
H^4(F) \stackrel{\tau}{\longrightarrow} H^5(E)\stackrel{q^*}{\longrightarrow} H^5(B_3).
\]
Suppose that $\eta \colon M \to E$ is a lift of $\tau_M \colon M \to B_5$. From \cite[p.190]{Thomas1967} we deduce that $\eta$ lifts to a map $M \to B_3$ if and only if $\eta^*\tau(\gamma_2)=0$. Hence it remains to prove there exists a lift $\eta\colon M\to E$ of $\tau_M$ such that the class $\eta^*\tau(\gamma_2)$ is mapped to $\hat\chi(M)$ under the isomorphism $H^5(M) \cong \ZZ_2$. We identify $H^5(M)$ with $\ZZ_2$. We will show
\begin{equation*}
 \hat\chi(M) = I_2(M) = t_5^{-1} \left(\Omega (U_M)\right)= \eta^*\tau(\gamma_2). \tag{$\ast$}
\end{equation*}
In the lines which follow we introduce the objects in $(\ast)$ and prove the equalities. 

The number $I_2(M)$ is defined in \cite[p. 89]{Thomas1967a} which is a mod $2$ index of a two-field with singularities on $M$. The first equality is a particular case of \cite[Theorem 1.1]{Thomas1967a}.

For the second equality in $(\ast)$ let $T$ be the Thom space of $\tau_M$ and $U_M \in H^5(T;\ZZ)$ shall denote the Thom class of $T$. Denote by $t_j \colon H^j(M) \to H^{5+j}(T)$ the Thom isomorphism. Furthermore  recall first the Adem relation
\[
\Sq^2\Sq^4 + \Sq^5\Sq^1 = \Sq^6.
\]
Thus on integral classes of dimenesion $\leq 5$ we obtain
\[
\Sq^2\Sq^4 =0.
\] 
Let $\Omega$ be the secondary cohomology operation associated to the above relation applied to the cohomology of the space $T$, i.e. $\Omega$ is defined on $u \in H^j(T)$ ($j\leq 5$) provided $\Sq^4 u=0$ and its image is a coset in $H^{5+j}(T)$ by the subgroup $\Sq^2 H^{5+j-2}(T)$.
Note first, that $\Omega$ is defined on $U_M$ since $\Sq^4 U_M = t_4(w_4(M))$ and since $\eta$ is a lift of $\tau_M \colon M \to B_5$  we have $w_4(M)=0$ by part $(a)$ of this lemma. Second, by Poincar\'{e} duality the Wu class of $\Sq^2$ is $w_2(M)$, hence $Sq^2 H^8(T)=0$ since we assume $w_2(M)=0$ and therefore $\Omega$ is single--valued. The second equality of $(\ast)$ follows now from Theorem 2.2 in \cite{Thomas1967a}.

The last equality in $(\ast)$ can be shown as follows: Let $\gamma_5$ denote the universal vector bundle of $B_5$ and $T$ its Thom space and $U$ the Thom class of $\gamma_5$. If $Y$ is a topological space and $g \colon Y \to B_5$ is a continuous map, then we denote by $T_Y$ and $U_Y$ the Thom space and the Thom class of $g^*\gamma_5$. Set $w:=w_4 \in H^4(B_5;\ZZ_2)$ and $\alpha := \Sq^2$. Then the $w$ is realizable in the sense of $(6.1)$ in \cite[p. 102]{Thomas1967a} and $(w,\alpha)$ is \emph{admissible}, see \cite[p.102, (6.2), (6.3)]{Thomas1967a} and see \cite[p.108]{Thomas1967a} for a proof that $(w,\alpha)=(w_4,\Sq^2)$ is admissible. Let $\tilde p \colon \tilde E \to B_5$ denote the principal fibration induced by $w=w_4$ considered as a map $B_5 \to K(\ZZ_2,4)$. From Theorem 6.4 in \cite{Thomas1967a} we have that there is an $m \in H^5(B_5)$ and a $k \in H^5(\tilde E)$ such that
\[
U_{\tilde E} \smile (k + \tilde p^*m) \in \Omega(U_{\tilde E}).
\]
One computes that $H^5(\tilde E) \cong \ZZ_2$ and $k$ is the generator.
The inclusion $K(\ZZ_2,4) \to K(\ZZ_{10},4) \cong K(\ZZ_2,4) \times K(\ZZ_5,4)$ (which is on the first factor the identity and on the second it is the constant map to the basepoint of $K(\ZZ_5,4)$) induces a map $f\colon E \to \tilde E$ such that $f^*(k) = \tau(\gamma_2)$. Hence by the naturality of $\Omega$ one obtains
\[
 U_{E} \smile (\tau(\gamma_2) + p^*m) \in \Omega(U_{E}).
\]
Using again the naturality of $\Omega$ with respect to the map $\eta \colon M \to E$ we get
\[
 U_{M} \smile (\eta^*\tau(\gamma_2) + \eta^*p^*m) = \Omega(U_{M}).
\]
Observe now that $H^5(B_5)$ is generated by $w_2\smile w_3$ and $w_5$. Since $w_2(M)=0$ and $w_5(M) = \Sq^1 w_4(M)=0$ it follows that $\eta^*p^*m=0$. Hence $U_{M} \smile \eta^*\tau(\gamma_2)) = \Omega(U_{M})$, i.e. $\eta^*\tau(\gamma_2) = t_5^{-1}(\Omega(U_M))$, which proves the last equality and finally part $(b)$.
\end{proof}
\bigskip
\bigskip
In Theorem \ref{TItroduction} (a) we saw that an irreducible $\SOg(3)$-manifold $M$ with $w_2(M)=0$ possesses also a standard structure. Hence there are two different descriptions of the tangent bundle of $M$. We close this section with a proposition which compares these two descriptions. In Lemma 3.1 of \cite{ABBF11} the existence of an irreducible $\SOg(3)$-structure is equivalent to the existence of a $3$-dimensional vector bundle $\xi$ over $M$ such that the tangent bundle $\tau_M$ is isomorphic to the symmetric trace-free endomorphisms $\Sym_0(\xi)$ of $\xi$. On the other side, since $M$ admits also a standard $\SOg(3)$-structure, the tangent bundle is isomorphic to $\eta\oplus \e^2$, where $\eta$ is a rank $3$ vector bundle and $\e^2$ the trivial rank 2 vector bundle over $M$. Finally to formulate the proposition, we have to introduce a certain kind of operation on rank $3$ vector bundles over $5$-manifolds:

Let $\xi$ be a $3$-dimensional vector bundle with a spin structure over $M$. Since $M$ is of dimension $5$, we have $ \xi \in [M,\HH\PP^\infty] \cong [M,S^4]$. Let $g \colon S^4 \to S^4$ be a map of degree $5$. Then $g$ induces a map $G \colon [M,S^4] \to [M,S^4]$, such that $G(\xi)=g\circ \xi$. We denote by the same letter $G$ the induced map on $[M,\HH\PP^\infty]$. It is clear that $G$ depends only on the homotopy class of $g$.

\begin{prop}\label{PRelationEandV}
Let $M$ be a spin $5$-manifold such that $H^4(M;\ZZ)$ has no $2$-torsion. Suppose furthermore that $M$ admits an irreducible $\SOg(3)$-structure. Let $\xi$ be $3$-dimensional vector bundle with spin structure such that 
\[
  \tau_M \cong \Sym_0^2(\xi).
\]
Then we have 
\[
  \tau_M \cong G(\xi) \oplus \varepsilon^2.
\]
\end{prop}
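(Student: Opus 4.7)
The plan is to deduce the isomorphism $\tau_M\cong G(\xi)\oplus\varepsilon^2$ of oriented rank-$5$ vector bundles by matching their primary characteristic classes and then invoking the classification of such bundles over $5$-complexes due to \u{C}adek and Van\v{z}ura~\cite{Cadek1993}. Under the hypothesis that $M$ is spin with $H^4(M;\ZZ)$ free of $2$-torsion, their theorem reduces isomorphism of two oriented rank-$5$ bundles to the equality of $p_1$ together with the Stiefel-Whitney classes $w_2,w_4,w_5$; so the task reduces to verifying these invariants on the two sides.

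For the Stiefel-Whitney classes, both sides are spin: $w_2(\tau_M)=0$ because $M$ is spin, while $w_2(G(\xi)\oplus\varepsilon^2)=w_2(G(\xi))=0$ because $G(\xi)$ is by construction classified by a map factoring through $\HH\PP^\infty$. The classes $w_4$ and $w_5$ vanish on $\tau_M$ by Proposition~\ref{PNeccessaryConditions} -- applicable because $\tau_M\cong\Sym_0^2(\xi)$ carries an irreducible $\SOg(3)$-reduction -- and they vanish on $G(\xi)\oplus\varepsilon^2$ for dimensional reasons, since $G(\xi)$ has rank~$3$.

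For the Pontryagin class, I would factor $\xi\simeq i\circ\xi'$ through the $5$-skeleton $\HH\PP^1\cong S^4$, which is possible because $\dim M=5$. Since $g$ induces multiplication by $\deg(g)=5$ on $H^4(S^4;\ZZ)$, this yields
\[
 p_1(G(\xi)\oplus\varepsilon^2)=p_1(G(\xi))=\xi'^*g^*i^*p_1=5\,p_1(\xi).
\]
On the other side, $p_1(\tau_M)=p_1(\Sym_0^2(\xi))=\xi^*B\rho^*(p_1)$ is by Remark~\ref{RHofZ5}(b) also the matching multiple of $p_1(\xi)$, so the two Pontryagin classes coincide in $H^4(M;\ZZ)$.

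With the listed characteristic classes agreeing on both sides, the \u{C}adek-Van\v{z}ura classification produces the desired isomorphism $\tau_M\cong G(\xi)\oplus\varepsilon^2$. The main obstacle is invoking that classification in exactly the form required here -- namely, that over a spin $5$-manifold with $H^4$ free of $2$-torsion no secondary invariant beyond $(w_2,w_4,w_5,p_1)$ can distinguish two oriented rank-$5$ bundles -- which is precisely where the no-$2$-torsion hypothesis plays its essential role.
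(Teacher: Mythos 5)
Your characteristic-class computations are essentially the ones the paper makes (both sides have $w_2=w_4=w_5=0$ and $p_1=5\,p_1(\xi)$), but the final step --- invoking Theorem \ref{TCV5Bundles} to conclude $\tau_M\cong G(\xi)\oplus\varepsilon^2$ from equality of $(w_2,w_4,w_5,p_1)$ --- is a genuine gap. That theorem is stated only under the standing hypotheses of Section \ref{SNonSpinManifolds}, which include $w_2(M)\neq 0$, whereas here $M$ is spin, and the hypothesis $w_2(M)\neq 0$ is not cosmetic. The secondary invariant separating stably isomorphic oriented rank-$5$ bundles over a closed oriented $5$--manifold lives in the quotient of $H^5(M;\ZZ_2)$ by $\Sq^2 H^3(M;\ZZ_2)+w_2(M)\smile H^3(M;\ZZ_2)$, and by Wu's formula $\Sq^2$ on $H^3$ is cup product with $w_2(M)$; so for $w_2(M)\neq 0$ the indeterminacy is all of $H^5(M;\ZZ_2)$ and the invariant dies, while for spin $M$ it survives as a genuine $\ZZ_2$. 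Concretely, $\pi_4(\SOg(5))=\ZZ_2$ while the stable group $\pi_4(\SOg)$ vanishes, so already over $S^5$ there are two non-isomorphic oriented rank-$5$ bundles with identical (vanishing) classes $w_2,w_4,w_5,p_1$; no hypothesis on torsion in $H^4$ can repair this. Equality of characteristic classes therefore cannot by itself deliver an unstable isomorphism in the spin case --- indeed this residual $\ZZ_2$ is exactly why Sections 2--3 need the secondary-operation machinery at all.

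The paper closes the gap by a two-step argument you would need to reproduce in some form: first, equality of $w_2$ and $p_1$ gives a \emph{stable} isomorphism between $\tau_M$ and $G(\xi)\oplus\varepsilon^2$ by Lemma 1 of \cite{Thomas1968} (stable oriented bundles over a $5$--complex are determined by characteristic classes under the stated torsion hypothesis on $H^4(M;\ZZ)$); second, Lemma 3 of \cite{Thomas1968} destabilizes, asserting that two stably isomorphic rank-$5$ bundles over such an $M$ are isomorphic once both admit two linearly independent sections. The bundle $G(\xi)\oplus\varepsilon^2$ has such sections tautologically, and $\tau_M$ does because a spin irreducible $\SOg(3)$--manifold carries a standard $\SOg(3)$--structure (Corollary \ref{CT14aBetterFormulated}). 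Without an argument of this kind resolving the $\ZZ_2$ ambiguity, your proof does not go through.
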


\begin{proof}
It is known that $H^*(\HH\PP^\infty;\ZZ) = \ZZ[u]$ where $u \in H^4(\HH\PP^\infty;\ZZ)$. Moreover we have $G^*u=5u$ by definition and $p_1 = 4u$ where $p_1$ generates the algebra $H^*(B\SOg(3);\ZZ)/{\rm torsion}$. Therefore by naturality we obtain $p_1(G(\xi))=5p_1(\xi)=p_1(M)$ and $w_2(G(\xi))=0$. By Lemma 1 of \cite{Thomas1968} we have that $\tau_M$ and $G(\xi)\oplus\varepsilon^2$ are stably isomorphic and by Lemma 3 of the same paper we obtain that they are isomorphic, since $M$ admits a standard $\SOg(3)$-structure.
\end{proof}

\section{Proof of Theorem \ref{TItroduction} (b)}\label{SNonSpinManifolds}

In this section we work under the assumptions of Theorem \ref{TItroduction} (b) and let $\mathcal P \colon H^2(M;\ZZ_2) \to H^4(M;\ZZ_4)$ denote the Pontryagin square. Under these conditions the authors of \cite{Cadek1993} prove the following:

\begin{thm}\label{TCV5Bundles}
Let $\xi$ be a vector bundle of rank $5$ over $M$. Then $\xi$ is uniquely determined by $w_2(\xi), w_4(\xi)$ and $p_1(\xi)$ such that
\[
  \rho_4\,  p_1(\xi) = \mathcal P w_2(\xi) + i_* w_4(\xi),
\]
where $\rho_4$ is the mod $4$ reduction of an integral cohomology class and $i_* \colon H^*(M,\ZZ_2) \to H^*(M,\ZZ_4)$ is the induced map from the inclusion $\ZZ_2 \to \ZZ_4$.
\end{thm}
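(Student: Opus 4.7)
The plan is to apply obstruction theory via the truncated Postnikov tower of $B_5 := B\SOg(5)$ through dimension $5$. The relevant homotopy groups in this range are $\pi_2(B_5) = \ZZ_2$, $\pi_3(B_5) = 0$, $\pi_4(B_5) = \ZZ$, and $\pi_5(B_5) = \ZZ_2$, with corresponding classifying classes $w_2$, an integral refinement of $p_1$, and $w_5$. The Wu relation $\Sq^1 w_4 = w_5$, valid for any oriented bundle, forces $w_5$ to be determined by $w_4$, so the invariants $w_2$, $w_4$, $p_1$ should indeed suffice to distinguish rank-$5$ oriented bundles over a $5$-complex.

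The first technical step is to establish the universal identity
\[
\rho_4\, p_1 = \mathcal{P} w_2 + i_* w_4 \in H^4(B_5;\ZZ_4).
\]
I would pull back along the classifying map $BT \to B_5$ of a maximal torus, where the universal rank-$5$ bundle splits as a sum of two oriented $2$-plane bundles and a trivial line. Both sides then become symmetric polynomials in the mod-$2$ Euler classes of the $2$-plane summands, and the identity reduces to the additivity property of the Pontryagin square together with the elementary relation $\rho_4(x^2) = \mathcal{P}(\rho_2 x)$ for an integral class $x$. Injectivity of $H^4(B_5;\ZZ_4) \to H^4(BT;\ZZ_4)$ on Weyl-invariants then completes the step.

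For existence I would lift a prescribed triple $(w_2,w_4,p_1)$ stage by stage through the Postnikov tower of $B_5$. The class $w_2$ yields a map $M \to K(\ZZ_2,2)$; the next stage is a principal fibration with an integral $k$-invariant whose mod-$4$ reduction encodes precisely the universal identity above. The compatibility relation is therefore exactly the obstruction-theoretic condition allowing $p_1$ to be realised consistently with the chosen $w_2$ and $w_4$. The hypothesis that $H^4(M;\ZZ)$ contains no element of order $4$ is used here to ensure that the reduction $H^4(M;\ZZ) \to H^4(M;\ZZ_4)$ is injective on its $4$-torsion, so that the $\ZZ_4$-identity captures the full integral constraint and every admissible triple is realised by some bundle.

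Uniqueness follows by comparing two bundles $\xi,\xi'$ with matching invariants stage by stage: the discrepancy at each level lives in a cohomology group of $M$ with coefficients in a homotopy group of $B_5$, and $\dim M = 5$ together with the structure of $H^4(M;\ZZ)$ forces this indeterminacy to vanish. The main obstacle I anticipate is bookkeeping the $w_2$-twisting of the Postnikov tower, which is precisely what forces the Pontryagin square to appear in the universal relation, and verifying that the no-$4$-torsion hypothesis genuinely eliminates the $4$-torsion ambiguity at the $p_1$-stage rather than merely displacing it to another level of the tower.
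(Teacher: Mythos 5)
A preliminary remark: the paper gives no proof of this statement at all — it is quoted as a result of \u{C}adek and Van\v{z}ura \cite{Cadek1993} — so your attempt can only be judged against the actual content of the claim. Your derivation of the universal relation $\rho_4\, p_1 = \mathcal P w_2 + i_* w_4$ by restriction to the maximal torus is correct (it is a classical identity of Wu, valid in $H^4(B\SOg(5);\ZZ_4)$ with no hypothesis on the base, using $\mathcal P(a+b)=\mathcal P a+\mathcal P b+i_*(a\smile b)$ and $\mathcal P(\rho_2 x)=\rho_4(x^2)$). The stage-$4$ uniqueness is also essentially right in outline, though the order-$4$ hypothesis is consumed there rather than in any existence step: two lifts to the fourth Postnikov stage differing by $d\in H^4(M;\ZZ)$ change $w_4$ by $\rho_2 d$ and $p_1$ by $\pm 2d$ (since $p_1$ restricts to twice a generator on the fibre $K(\ZZ,4)$, while $w_4$ restricts to $\rho_2\iota_4$), so equality of $w_4$ and of $p_1$ gives $d=2e$ with $4e=0$, and only the absence of elements of order $4$ forces $d=0$. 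Existence is in any case not asserted here; that is the content of the rank-$3$ statement, Theorem \ref{TCV3Bundles}.

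The genuine gap is at the top Postnikov stage, and your dismissal of it is where the argument fails. Since $\pi_5(B\SOg(5))=\pi_4(\SOg(5))=\ZZ_2$, two bundles agreeing over the $4$-skeleton differ by a class in $H^5(M;\ZZ_2)$, which for a closed $5$-manifold is $\ZZ_2$, not $0$; the discrepancy group does not vanish ``because $\dim M=5$''. Nor does $w_5=\Sq^1 w_4$ detect it: over $S^5$ there are exactly two oriented rank-$5$ bundles, both with all Stiefel--Whitney and Pontryagin classes zero, so the classification by $(w_2,w_4,p_1)$ is simply false for a general closed oriented $5$-manifold with torsion-free $H^4$. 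Your proof never invokes the standing hypothesis $w_2(M)\neq 0$ and would therefore ``prove'' a false statement. What actually rescues the theorem is that the top difference class is only well defined modulo an indeterminacy subgroup of the shape $\Sq^2 H^3(M;\ZZ_2)+w_2\smile H^3(M;\ZZ_2)$ (compare Proposition \ref{Pk2CosetOf} for the analogous computation in the lifting problem), and for a closed oriented $5$-manifold $\Sq^2$ on $H^3$ is cup product with the Wu class $v_2=w_2(M)$, so by Poincar\'e duality this subgroup is all of $H^5(M;\ZZ_2)$ precisely when $w_2(M)\neq 0$. Identifying that indeterminacy and showing it exhausts $H^5(M;\ZZ_2)$ under the hypotheses of Theorem \ref{TItroduction} (b) is the real content of the cited result, and it is entirely missing from your sketch.
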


\begin{thm}\label{TCV3Bundles}
For every $W \in H^2(M;\ZZ_2)$ and $P \in H^4(M;\ZZ)$ there exists a $3$-dimensional vector bundle $\eta$ over $M$ with $w_2(\eta) = W$ and $p_1(\eta)=P$ if and only if
\[
  \rho_4\, P = \mathcal P W
\]
\end{thm}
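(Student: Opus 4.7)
The plan is to prove both directions by obstruction theory in the Postnikov tower of $B\SOg(3)$, whose low-dimensional homotopy groups are $\pi_2 = \ZZ_2$, $\pi_3 = 0$, $\pi_4 = \ZZ$, $\pi_5 = \ZZ_2$. Necessity follows at once from the universal identity
\[
\rho_4\, p_1 = \mathcal P(w_2) \in H^4(B\SOg(3); \ZZ_4)
\]
for oriented rank-$3$ bundles (a classical Wu-type formula), pulled back along a classifying map of $\eta$.

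For sufficiency, the second Postnikov section is $B\SOg(3)_2 = K(\ZZ_2, 2)$ and the fourth sits in a principal fibration $K(\ZZ, 4) \to B\SOg(3)_4 \to K(\ZZ_2, 2)$ whose $k$-invariant $k_4 \in H^5(K(\ZZ_2, 2); \ZZ) \cong \ZZ_4$ I identify (up to a unit) with $\delta \mathcal P(\iota_2)$, the integral Bockstein of the Pontryagin square; this identification is forced by the universal relation above, which says precisely that $\delta \mathcal P(w_2)$ vanishes on $B\SOg(3)$. Given $(W, P)$ with $\rho_4 P = \mathcal P W$, I first represent $W$ by $f_2 \colon M \to K(\ZZ_2, 2)$. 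The obstruction to lifting $f_2$ to $B\SOg(3)_4$ is $f_2^* k_4 = \pm \delta \mathcal P(W)$, which vanishes because $\mathcal P(W) = \rho_4(P)$ is in the image of $\rho_4$ and so is annihilated by $\delta$ (exactness of the Bockstein sequence $0 \to \ZZ \stackrel{4}{\to} \ZZ \to \ZZ_4 \to 0$). A lift $\tilde f$ classifies a bundle with $w_2 = W$ and $p_1 = P'$ for some $P'$ satisfying $\rho_4 P' = \rho_4 P$, so $P - P' = 4R$ for some $R \in H^4(M;\ZZ)$. The set of lifts is a torsor under $H^4(M;\ZZ)$, and translation by $a$ shifts $p_1$ by $\pm 4a$; the scalar $\pm 4$ arises because $p_1(\ad H) = -4 c_2(H) = -4$ for the quaternionic Hopf bundle $H$ on $S^4$, whose associated $3$-bundle under $\SUg(2) \to \SOg(3)$ generates $\pi_4(B\SOg(3))$. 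Translating by $\pm R$ produces a lift with $p_1 = P$. Lifting further up the tower is unobstructed on a $5$-manifold (the remaining obstructions, in $H^{n+1}(M;\pi_n(B\SOg(3)))$ for $n \geq 5$, all vanish for dimensional reasons), yielding a map $M \to B\SOg(3)$ and hence the required bundle $\eta$.

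The technical core lies in the two identifications that drive the argument: $k_4 = \pm \delta \mathcal P(\iota_2)$ in $H^5(K(\ZZ_2, 2); \ZZ)$, and the scalar $\pm 4$ in the action on $p_1$. The first reduces to computing $H^5(K(\ZZ_2, 2); \ZZ)$ and checking its pullback to $B\SOg(3)$ against the universal relation; the second is a Chern--Pontryagin calculation for the adjoint representation of $\SUg(2)$. I note that the hypothesis that $H^4(M;\ZZ)$ has no element of order $4$ does not actually enter this existence statement; it is needed only for the companion classification Theorem~\ref{TCV5Bundles}.
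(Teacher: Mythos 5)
First, a point of comparison: the paper does not prove this statement at all --- it is imported verbatim from \u{C}adek--Van\v{z}ura \cite{Cadek1993} (see also \cite{Woodward1982}, \cite{AW2014} and Remark \ref{ROnTheExistenceOf3BundlesOver5Complex}). So your proposal is not competing with an argument in the text; it is a genuine attempt at a self-contained proof via the Postnikov tower of $B\SOg(3)$. The skeleton is right: the homotopy groups $\pi_2=\ZZ_2$, $\pi_3=0$, $\pi_4=\ZZ$, $\pi_5=\ZZ_2$ are correct, necessity does follow from the universal relation $\rho_4 p_1=\mathcal P w_2$ in $H^4(B\SOg(3);\ZZ_4)\cong\ZZ_4$, the scalar $\pm 4$ in the action of $H^4(M;\ZZ)$ on lifts is correctly traced to $p_1(\ad H)=-4c_2(H)$, and the higher obstructions vanish on a $5$-complex. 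Your observation that the hypothesis on $H^4(M;\ZZ)$ is not needed for this existence statement is also consistent with the alternative sources cited in the paper.

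The one step whose justification does not hold up as written is the identification $k_4=\pm\delta\mathcal P(\iota_2)$. You argue it is ``forced'' because the universal relation makes $\delta\mathcal P(w_2)$ vanish on $B\SOg(3)$; but $H^5(B\SOg(3);\ZZ)=0$ (by the universal coefficient theorem from $H_4=\ZZ$, $H_5=\ZZ_2$), so \emph{every} class of $H^5(K(\ZZ_2,2);\ZZ)$ pulls back to zero and this check distinguishes nothing --- it is consistent with $k_4=0$ or $k_4=2\delta\mathcal P(\iota_2)$ as well. Fortunately your proof does not actually need the precise value of $k_4$: since $H^5(K(\ZZ_2,2);\ZZ)\cong\ZZ_4$ is generated by $\delta\mathcal P(\iota_2)$, one has $f_2^*k_4=c\,\delta\mathcal P(W)$ for some $c\in\ZZ_4$ whatever $k_4$ is, and this vanishes whenever $\mathcal P W=\rho_4 P$ lies in the image of $\rho_4$; necessity you derive independently from the universal relation. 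If you do want the exact $k$-invariant, the correct forcing argument is the one implicit in your second ``technical core'': $p_1$ generates $H^4(B\SOg(3);\ZZ)\cong H^4(B\SOg(3)_4;\ZZ)\cong\ZZ$ and restricts to $\pm 4\iota_4$ on the Postnikov fibre $K(\ZZ,4)$, so by exactness of the Serre sequence the kernel of the transgression $\tau\colon H^4(K(\ZZ,4);\ZZ)\to H^5(K(\ZZ_2,2);\ZZ)$ is $4\ZZ$, whence $k_4=\tau(\iota_4)$ has order exactly $4$ and must be a generator $\pm\delta\mathcal P(\iota_2)$. With that repair (or simply with the observation that the identification is dispensable), the argument closes.
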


\begin{rem}\label{ROnTheExistenceOf3BundlesOver5Complex}
Theorem \ref{TCV3Bundles} was also proved by Woodward in \cite[p. 514]{Woodward1982} and by Antieau/ Williams in \cite[Theorem 1]{AW2014}. Note that the minus sign in equation (2) of \cite[Theorem 1]{AW2014} is not correct. It should be either stated without the minus sign (which yields the same equation as in Theorem \ref{TCV3Bundles}) or the authors should use the first Chern class of the complexified bundle and keep the minus sign (since the first Chern class is the negative of the first Pontryagin class). Compare also \cite[p. 514]{Woodward1982} or \cite[Theorem 2]{Cadek1993} for the correct signs.
\end{rem}

Furthermore in \cite{ABBF11} it was proven 

\begin{thm}\label{TABBFStructure}
A $5$-manifold $M$ admits an irreducible $\SOg(3)$-structure if and only if there exists a three-dimensional oriented vector bundle $\eta$ over $M$ such that the tangent bundle is isomorphic to the bundle of symmetric trace-free endomorphism of $\eta$. 
\end{thm}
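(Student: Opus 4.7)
The theorem is the standard translation between $G$-structures and associated vector bundles, applied to the irreducible representation $\rho \colon \SOg(3) \to \SOg(5)$. My plan is to prove both implications at once via the following observation: for any principal $\SOg(3)$-bundle $P \to M$, if $\eta := P \times_{\SOg(3)} \RR^3$ denotes the rank $3$ vector bundle associated to the standard representation and $\xi := P \times_{\SOg(3),\rho} \RR^5$ denotes the rank $5$ vector bundle associated to $\rho$, then the $\SOg(3)$-equivariant identification $\RR^5 \cong \Sym_0(\RR^3)$ yields a canonical vector bundle isomorphism $\xi \cong \Sym_0(\eta)$. To establish this I would argue fibrewise: a frame $p \in P_x$ provides an identification $\eta_x \cong \RR^3$, hence $\End(\eta_x) \cong \End(\RR^3)$, and the defining formula $\rho(h) A = h A h^{-1}$ ensures that changing the frame within $P_x$ acts by simultaneous conjugation on both sides, so the identification descends to the quotient and matches $\Sym_0(\RR^3)$ with $\Sym_0(\eta_x)$.

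With the observation in hand, the forward direction is formal: an irreducible $\SOg(3)$-structure on $\tau_M$ is, by Remark \ref{RReductionGeometrically}, a principal $\SOg(3)$-bundle $P$ together with a $\rho$-reduction of the oriented orthonormal frame bundle of $\tau_M$ (for some chosen Riemannian metric). The associated vector bundle $P \times_{\SOg(3),\rho} \RR^5$ is then simultaneously isomorphic to $\tau_M$ (by the defining property of the reduction) and to $\Sym_0(\eta)$ (by the observation), where $\eta := P \times_{\SOg(3)} \RR^3$ is oriented of rank $3$.

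For the converse, given an oriented rank $3$ bundle $\eta$ and an isomorphism $\tau_M \cong \Sym_0(\eta)$, I would equip $\eta$ with a Euclidean metric (which exists because $\eta$ is oriented of rank $3$) and let $P$ denote its oriented orthonormal frame bundle. The trace pairing $\langle A, B\rangle = \tfrac{1}{2}\tr(AB)$ on $\Sym_0(\RR^3)$ is $\SOg(3)$-invariant under $\rho$, so it equips $\Sym_0(\eta)$, and hence $\tau_M$, with a Euclidean metric. The observation now yields an isomorphism of oriented Euclidean rank $5$ bundles $P \times_{\SOg(3),\rho} \RR^5 \cong \Sym_0(\eta) \cong \tau_M$, and passing to oriented orthonormal frame bundles produces the required $\rho$-reduction of the oriented orthonormal frame bundle of $\tau_M$ to $P$. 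The main point to watch is the naturality and $\SOg(3)$-equivariance of the fibrewise identification underlying the observation; once that is in place, both directions are straightforward unpacking of the associated-bundle formalism.
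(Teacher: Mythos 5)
Your argument is sound, but note that the paper does not actually prove this statement: it is quoted as an imported result (Lemma~3.1 of \cite{ABBF11}), so there is no internal proof to compare against. Your proof is the standard associated--bundle translation, and the key observation --- that for any principal $\SOg(3)$--bundle $P$ the bundle $P\times_{\SOg(3),\rho}\RR^5$ associated to the irreducible representation is canonically isomorphic to $\Sym_0(\eta)$ for $\eta = P\times_{\SOg(3)}\RR^3$ --- is correct, since $\rho$ is by construction the conjugation action on $\Sym_0(\RR^3)$ and the fibrewise identification is manifestly equivariant. Two small points deserve a sentence each in a written-out version. First, ``symmetric trace-free endomorphisms of $\eta$'' presupposes a Euclidean metric on $\eta$; in the forward direction this comes for free from the $\SOg(3)$--reduction, and in the converse you rightly choose one (any two choices give isomorphic bundles $\Sym_0(\eta)$). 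Second, in the converse the reduction you build realizes $\Sym_0(\eta)$ with the orientation induced from $\rho(\SOg(3))\subset\SOg(5)$, whereas the given isomorphism $\varphi\colon\tau_M\to\Sym_0(\eta)$ might a priori reverse orientations; since the rank is odd, $-\id$ is orientation-reversing on each fibre, so replacing $\varphi$ by $-\varphi$ lets you assume it is orientation-preserving, and the reduction then transfers to the oriented orthonormal frame bundle of $\tau_M$ as required. With these remarks the proof is complete.
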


Combining these three theorems leads us to the

\begin{proof}[\bf Proof of Theorem \ref{TItroduction} (b)]
Suppose first that $p_1(M)$ is divisible by $5$ and $w_4(M)=0$. Then there is a $P \in H^4(X;\ZZ)$ such that $p_1(M)=5\cdot P$. Moreover by Theorem \ref{TCV5Bundles} we have the relation $\rho_4 p_1(M)=\mathcal P w_2(M)$. Hence with coefficients in $\ZZ_4$ we obtain
\[
 \rho_4 P = 5\cdot \rho_4 P = \rho_4 5 \cdot P = \rho_4 p_1(M) = \mathcal P w_2(M).
\]
Hence by Theorem \ref{TCV3Bundles} there exists a vector bundle $\eta$ of rank $3$ such that $p_1(\eta)=P$ and $w_2(\eta)=w_2(M)$. For the induced bundle of symmetric trace-free endomorphisms of $\eta$, $\zeta:=\Sym_0(\eta)$ we have
\[
 p_1(\zeta)=p_1(M),\quad w_2(\zeta)=w_2(M), \quad w_4(\zeta)=0
\]
(see \cite[Theorem 3.2]{ABBF11}). By Theorem \ref{TCV5Bundles} we have that $\zeta$ is isomorphic to the tangent bundle of $M$ and with Theorem \ref{TABBFStructure} this proves that $M$ admits an irreducible $\SOg(3)$-structure.
Now let $M$ admit an irreducible $\SOg(3)$-structure. Then by Proposition \ref{PNeccessaryConditions} we conclude $p_1(M)$ is divisible by $5$ and $w_4(M)=0$.
\end{proof}


\begin{rem}\label{RDrawback}
We believe that in the general case (i.e. without the assumption on $H^4(M;\ZZ)$) the theorem should be true anyway. It is also reasonable to ask if a similar approach as in \cite{Atiyah1970} could be used for the existence of an irreducible $\SOg(3)$-structure.
\end{rem}

\section{Examples}

In \cite{Bobienski2007} the authors classified homogeneous manifolds with irreducible $\SOg(3)$-structures and in \cite{ABBF11} there were also some non-homogeneous examples mentioned as circle bundles over complex surfaces. In \cite{ABBF11} it was shown that $\SUg(3)/\SOg(3)$ does not possess a standard $\SOg(3)$-structure but possesses an irreducible one with $\hat\chi(M)=0$. So it was conjectured that $\hat\chi(M)$ is the second obstruction. However Proposition \ref{PCounterExampleToFriedrich} is counterexample.

\begin{proof}[\bf Proof of Proposition \ref{PCounterExampleToFriedrich}]
Let $\Sigma_d$ be the zero set of the homogeneous polynomial 
\[
    p_d(X_0,X_1,X_2,X_3)=X_0^d + X_1^d + X_2^d + X_3^d
\]
 in $\CC\PP^3$ for $d \in \NN$ and $d\neq0$. $\Sigma_d$ is a $4$-dimensional submanifold which is simply connected by the Lefschetz hyperplane section theorem. Let $u \in H^2(\CC\PP^3;\ZZ)$ be such that $H^*(\CC\PP^3;\ZZ)=\ZZ[u]/(u^4)$. We denote by the same letter $u$ the restriction of the generator of $H^*(\CC\PP^3;\ZZ)$ to $\Sigma_d$. The following facts about $\Sigma_d$ are well known 
\begin{enumerate}
 \item[(a)] The group $H^2(\Sigma_d;\ZZ)$ has no torsion and its rank is equal to $\chi(\Sigma_d)-2= (6-4d +d^2)d-2$,
 \item[(b)] the first and second Chern classes are given by $c_1(\Sigma)=(4-d)u$ and $c_2(\Sigma)=(6-4d+d^2)u^2$ respectively,
 \item[(c)] for the first Pontryagin class we have $p_1(\Sigma)=(4-d^2)u^2$ and for the second Stiefel-Whitney class one obtains $w_2(\Sigma)=d\cdot u \mod 2$.
\end{enumerate}

Let us consider the case $d=3$. We set $\Sigma:=\Sigma_3$ and let $\beta$ denote the intersection form of $\Sigma$. It is known that this space is diffeomorphic to $\CC\PP^2 \# 6\overline{\CC\PP^2}$. Furthermore let $w \in H^2(\Sigma;\ZZ)$ such that $w$ is orthogonal to $u$ with respect to $\beta$ and $w\neq u$, i.e. $w\smile u =0$. Let $\pi \colon M\to \Sigma$ be the  $S^1$-bundle over $\Sigma$ associated to the class $c:=u + w$. Using the Gysin-sequence for sphere bundles one obtains (see also Remark 3.3 in \cite{ABBF11})
\begin{enumerate}
 \item[(a)] $k(M)=\hat\chi(M) = \dim_\RR H^2(\Sigma;\RR) \mod 2$, hence $k(M)=\hat\chi(M)=1$,
 \item[(b)] $M$ is not spin, since this is only the case if $c \equiv w_2(\Sigma) \mod 2$,
 \item[(c)] we have $c\smile 5u = 5u^2 + u\smile w=5u^2=p_1(\Sigma)$, hence by the Gysin-sequence $\pi^*(p_1(\Sigma))=0$ thus $p_1(M)=0$,
 \item[(d)] we also have $w_4(M)=\pi^*(w_4(\Sigma))$. But $w_4(\Sigma)$ is the second Chern class $c_2(\Sigma) \mod 2$ and therefore $w_4(\Sigma) \equiv u^2 \mod 2$. Again we have $c \smile 3u = c_2(\Sigma)$ hence by naturality we obtain $w_4(M)=\pi^*(w_4(\Sigma))=0$. 
\end{enumerate}
This shows that $M$ is a non spin manifold of dimension $5$ with $p_1(M)=0$ and $w_4(M)=0$. Moreover $H^4(M;\ZZ)$ has no $4$-torsion which can be seen as follows: by the Gysin-sequence we have that $H^4(M;\ZZ)$ is isomorphic to $H^4(\Sigma;\ZZ)\cong\ZZ$ modulo the image of the map $H^2(\Sigma;\ZZ) \to H^4(\Sigma;\ZZ)$, $x\mapsto c\smile x$. We can choose $w$ in such a way that the image is $3\ZZ$, hence $H^4(M;\ZZ)=\ZZ_3$ which means, that $M$ can not be simply connected and furthermore that $H^4(M;\ZZ)$ has no $4$-torsion. So by Theorem \ref{TItroduction} (b) $M$ has to admit an irreducible $\SOg(3)$-structure with $\hat\chi(M)=1$.
\end{proof}

\begin{proof}[\bf Proof of Proposition \ref{PConnectedSumsOfSpin5Manifolds}]
If $M_1$ and $M_2$ are spin $5$-manifolds then, the connected sum $M_1\# M_2$ is again spin. The Kervaire semi-characteristic is computed by
\[
 k(M_1 \# M_2) = k(M_1) + k(M_2) + 1 \mod 2.
\]
Furthermore the first Pontryagin and the fourth Stiefel-Whitney class are additive under the operation of building connected sums. Hence we obtain
\begin{eqnarray*}
 k(\#^{2k+1}_{i=1} M_i) &=& \sum_{i=1}^{2k+1} k(M_i) + 2k \mod 2\\
	  &=& \sum_{i=1}^{2k+1} k(M_i) \mod 2.
\end{eqnarray*}
Now since $M_i$ is a spin $5$--manifold with an irreducible $\SOg(3)$--structure it admits a standard $\SOg(3)$--structure (cf. Corollary \ref{CT14aBetterFormulated}), hence by Theorem \ref{TAtiyah} $k(M_i)=0$ for all $i=1,\dots,2k+1$. Thus $k(\#^{2k+1}_{i=1} M_i)=0$ and $p_1(\#^{2k+1}_{i=1} M_i)$ is divisible by $5$, so $\#^{2k+1}_{i=1} M_i$ admits an irreducible $\SOg(3)$--structure by Corollary \ref{CT14aBetterFormulated}.
\end{proof}
%
%
%
%

\begin{exmpl}\label{EMtimesN} Let $N$ be a closed $3$-manifold and $\Sigma$ a closed surface both orientable. We will show that connected sums of $N \times \Sigma$ admit an irreducible $\SOg(3)$-structure. First note that both manifolds are spin. Moreover Steenrod showed that the tangent bundle of $N$ is always parallelizable. Hence $M = N \times \Sigma$ admits an irreducible $\SOg(3)$-structure. By Proposition \ref{PConnectedSumsOfSpin5Manifolds} the manifolds $\#_{i=1}^{2l+1}(N_i \times \Sigma_i)$ have an irreducible $\SOg(3)$-structure where $N_i$ and $\Sigma_i$ are orientable closed manifolds of dimension $3$ and $2$ respectively.
\end{exmpl}

\begin{exmpl}
There are two $S^3$ bundles over $S^2$. One is the trivial bundle covered by Example \ref{EMtimesN} and the other we denote by $\pi \colon M \to S^2$. In the following we will prove that $M$ admits an irreducible $\SOg(3)$-structure. $M$ is simply connected with $H_2(M)=\ZZ$, $H^4(M;\ZZ)=0$ and $w_2(M)\neq 0$. Let $\tau_M$ and $\tau_{S^2}$ be the tangent bundle of $M$ and $S^2$ respectively. Then $\tau_M = \eta \oplus \tau_{S^2}$ with $\eta$ the vertical distribution of $\pi \colon M \to S^2$. It follows that $0\neq w_2(M) = w_2(\eta)$ and $p_1(M)=p_1(\eta)=0$. Hence by Theorem \ref{TCV3Bundles}, \ref{TCV5Bundles} and \ref{TABBFStructure} we have that $\tau_M \cong \Sym_0(\eta)$. By Corollary \ref{CNonSpinCase} (b) $M$ admits an irreducible $\SOg(3)$-structure.
\end{exmpl}

\begin{rem}
Note that every fibre bundle $\pi \colon M \to S^3$ is trivial if $M$ is orientable and of dimension $5$, since $\pi_2$ of the diffeomorphism group of an orientable surface always vanishes.
\end{rem}


%
\nocite{*}
\bibliography{tiso3str}
\bibliographystyle{alpha}

\end{document}